\numberwithin{equation}{section}
\theoremstyle{plain}
\newtheorem{theorem}{Theorem}[section]
\newtheorem{proposition}[theorem]{Proposition}
\newtheorem{lemma}[theorem]{Lemma}
\newtheorem{corollary}[theorem]{Corollary}
\newtheorem{remark}{Remark}
\begin{document}
\title{ Improving the Delsarte bound } 

\author{Gary R.W. Greaves$^{a}$, Jack H. Koolen$^{b,c}$ and  Jongyook Park$^{d}$\\
\\
\small $^a$School of Physical and Mathematical Sciences,\\
\small Nanyang Technological University, 21 Nanyang Link, Singapore 637371, Singapore\\
\small $^b$School of Mathematical Sciences,\\
\small University of Science and Technology of China, 96 Jinzhai Road, Hefei,  Anhui, 230026, PR China\\
\small $^c$CAS Wu Wen-Tsun Key Laboratory of Mathematics,\\
\small University of Science and Technology of China, 96 Jinzhai, Road, Hefei, Anhui, 230026, PR China\\
\small $^d$Department of Mathematics, Kyungpook National University, Daegu, 41566, Republic of Korea\\
\small {\tt e-mail: gary@ntu.edu.sg, koolen@ustc.edu.cn, jongyook@knu.ac.kr}\vspace{-3pt}
}

 \maketitle

\begin{abstract}
 In this paper, we study the order of a maximal clique in an amply regular graph with a fixed smallest eigenvalue by considering a vertex that is adjacent to some (but not all) vertices of the maximal clique. As a consequence, we show that if a strongly regular graph contains a Delsarte clique, then the parameter $\mu$ is either small or large. 
 Furthermore, we obtain a cubic polynomial that assures that a maximal clique in an amply regular graph is either small or large (under certain assumptions). 
Combining this cubic polynomial with the claw-bound, we rule out an infinite family of feasible parameters $(v,k,\lambda,\mu)$ for strongly regular graphs. 
Lastly, we provide tables of parameters $(v,k,\lambda,\mu)$ for nonexistent strongly regular graphs with smallest eigenvalue $-4, -5, -6$ or $-7$.


\bigskip
\noindent
 {\bf Key Words: Strongly regular graphs, Cliques, Smallest eigenvalues, Hoffman bound, Delsarte bound, Claw-bound, Feasible parameters}
\\
\noindent
 {{\bf 2020 Mathematics Subject Classification: 05E30} }
\end{abstract}

\section{Introduction}
The following bounds are well-known for the order of a clique in a graph, regular graph, and strongly regular graph, respectively. 
\begin{itemize}
\item[$(i)$] For a graph $\Gamma$ with $v$ vertices, Cvetkovi\'{c} proved that the order of a coclique in $\Gamma$ is at most ${\rm min}\{v-n_+, v-n_-\}$ \cite[Theorem 3.5.1]{Spectra}, where $n_+$ and $n_-$ are the numbers of positive and negative eigenvalues of $\Gamma$, respectively. We call this bound the {\em Cvetkovi\'{c}  bound}. 

\item[$(ii)$] For a regular graph with valency $k$ and smallest eigenvalue $-m$, Hoffman proved that the order of a coclique in $\Gamma$ is at most $v\frac{m}{k+m}$ \cite[Theorem 3.5.2]{Spectra}. We call this bound  the {\em Hoffman bound}.

\item[$(iii)$] For a strongly regular graph with valency $k$ and smallest eigenvalue $-m$,  Delsarte proved that the order of a clique in $\Gamma$ is at most $1+\frac{k}{m}$ \cite[Section 3.3.2]{Delsarte}. We call this bound the {\em Delsarte bound} and a clique in $\Gamma$ is called a {\em Delsarte clique} if its order is equal to $1+\frac{k}{m}$. 
\end{itemize}

From the Hoffman bound and  Cvetkovi\'{c} bound, we can obtain an upper bound on the order of a clique in a graph $\Gamma$ by considering the complement of $\Gamma$. We note that if the graph $\Gamma$ is a strongly regular graph, then the bound obtained from the Hoffman bound is the same as the Delsarte bound \cite[Proposition 1.3.2]{bcn89}.

Our purpose is to study the order of a maximal clique in an amply regular graph with a fixed smallest eigenvalue by considering a vertex that is adjacent to some (but not all) vertices of the maximal clique. 
As a consequence, we show that if a strongly regular graph contains a Delsarte clique, then the parameter $\mu$ is either small or large.
Our main tool, which we exhibit in Section~\ref{sec:poly} is a cubic polynomial corresponding to an amply regular graph $\Gamma$ that can be used to bound that size of a maximal clique in $\Gamma$.
In Section~\ref{sec:claw}, we combine this cubic polynomial with the claw-bound to rule out an infinite family of feasible parameters $(v,k,\lambda,\mu)$ for strongly regular graphs. 
In the appendix, we provide tables of parameters $(v,k,\lambda,\mu)$ for nonexistent strongly regular graphs with smallest eigenvalue $-4, -5, -6$ or $-7$.

\section{Definitions and preliminaries}
\label{sec:2}
All the graphs considered in this paper are finite, undirected and simple. 
For basic definitions and terminology, the reader is referred to \cite{bcn89}.
Let $\Gamma$ be a connected graph with vertex set $V(\Gamma)$. 
The {\em distance} $d(x,y)$ between two vertices $x,y\in V(\Gamma)$ is the length of a shortest path between $x$ and $y$ in $\Gamma$. 
The maximum distance occuring in $\Gamma$ is called the {\em diameter} of $\Gamma$. 
For each $x\in V(\Gamma)$, denote by $\Gamma(x)$ the set of vertices in $\Gamma$ that are adjacent to $x$.  
For a vertex $x$ of $\Gamma$, the number $|\Gamma(x)|$ is called the valency of $x$ in $\Gamma$. 
In particular, if $k=|\Gamma(x)|$ holds for all $x\in V(\Gamma)$, then $\Gamma$ is {\em regular} with {\em valency} $k$ .

A regular graph with $v$ vertices and valency $k$ is called {\em edge-regular} with parameters $(v,k,\lambda)$ if any two adjacent vertices have exactly $\lambda$ common neighbors. An edge-regular graph with parameters $(v,k,\lambda)$ is called {\em amply regular} with parameters $(v,k,\lambda,\mu)$ if any two vertices at distance $2$ have exactly $\mu$ common neighbors. An amply regular graph with parameters $(v,k,\lambda,\mu)$ with diameter at most $2$ is also called {\em strongly regular} with parameters $(v,k,\lambda,\mu)$.

\begin{theorem}{\rm (Cf.\ \cite[Theorem 8.6.3]{BVM})}\label{SNB}
Let $m\geqslant2$ be an integer. Let $\Gamma$ be a strongly regular graph with parameters $(v,k,\lambda,\mu)$ and eigenvalues $k>\sigma>\tau$. If $\tau=-m$ and $\sigma>\frac{1}{2}m(m-1)(\mu+1)-1$, then one of the following holds: 
\begin{itemize}
\item[$(i)$] $\mu=m(m-1)$ and  $\Gamma$ is a Latin square graph $LS_m(n)$,
\item[$(ii)$] $\mu=m^2$ and $\Gamma$ is a block graph of Steiner system $2$-$(mn+m-n,m,1)$,
\end{itemize}
where $\sigma=n-m$.
\end{theorem}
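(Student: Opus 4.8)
The plan is to follow Neumaier's claw-bound argument, whose conclusion is exactly that a strongly regular graph with smallest eigenvalue $-m$ must be a Latin square graph or a Steiner block graph unless $\sigma$ is small. First I would record the eigenvalue identities: since $\sigma$ and $-m$ are the roots of $x^{2}-(\lambda-\mu)x-(k-\mu)$, the hypothesis $\tau=-m$ gives $k=m\sigma+\mu$ and $\lambda=\mu+\sigma-m$, so the Delsarte bound equals $1+k/m=1+\sigma+\mu/m$. A clique $C$ meeting this bound (a Delsarte clique) has, by tight eigenvalue interlacing, the property that $\{C,\,V(\Gamma)\setminus C\}$ is an equitable partition; in particular every vertex outside $C$ is adjacent to a constant number of vertices of $C$. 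The two target families are precisely the \emph{geometric} graphs whose lines are Delsarte cliques: point graphs of a partial geometry $pg(s,t,\alpha)$ with $t+1=m$ and $\mu=\alpha(t+1)=\alpha m$, the Latin square graph $LS_m(n)$ arising for $\alpha=t=m-1$ and the Steiner block graph for $\alpha=t+1=m$.

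The core of the proof is the claw bound, which I would use in two ways. First, a counting argument around a \emph{claw} -- a vertex $x$ whose neighbourhood meets several maximal cliques nontrivially -- shows that if $\Gamma$ is not geometric then $\sigma\le\tfrac12 m(m-1)(\mu+1)-1$; here one fixes a near-Delsarte clique, counts the $\mu$ common neighbours of a vertex at distance two against the at most $m$ lines through each vertex, and the factor $\tfrac12 m(m-1)$ enters as the number of pairs among these $m$ lines. Thus the hypothesis forces $\Gamma$ to be geometric, i.e.\ the point graph of $pg(s,t,\alpha)$ with $t+1=m$ and $\alpha=\mu/m$. Second, for geometric graphs the classical feasibility conditions for partial geometries -- integrality of the eigenvalue multiplicities together with the generalized-quadrangle bound $s\le t^{2}$ as the extreme case $\alpha=1$ -- keep $\sigma=s-\alpha$ below the same cubic threshold whenever $\alpha\le m-2$. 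Matching both of these bounds to the single constant $\tfrac12 m(m-1)(\mu+1)-1$, and extracting that exact constant from the claw count, is the step I expect to be the main obstacle.

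Granting the claw bound, the hypothesis $\sigma>\tfrac12 m(m-1)(\mu+1)-1$ leaves only the two geometric cases, and it remains to identify them. Writing $n:=\sigma+m$, a partial geometry with $\alpha=t=m-1$ is a net of degree $m$ and order $n=s+1$, whose point graph is $LS_m(n)$ with $\mu=m(m-1)$, giving case $(i)$. A partial geometry with $\alpha=t+1=m$ is a dual net, equivalently the block graph of a Steiner system with blocks of size $m$ in which each point lies on $s+1=n+1$ blocks; then $(N-1)/(m-1)=n+1$ forces the point set to have size $N=mn+m-n$ and $\mu=m^{2}$, giving case $(ii)$. Recognizing these two extreme partial geometries as genuine nets and Steiner systems uses that $s=\sigma+\alpha$ is large, so a Bose-type recognition theorem applies. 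In both cases $\sigma=s-\alpha=n-m$, as claimed.
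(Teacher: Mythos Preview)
The paper does not give a proof of this theorem; it is quoted as a known result from Brouwer--Van Maldeghem (the result is originally due to Neumaier), and the text immediately after the statement reads ``We use the above theorem as follows.'' There is therefore no proof in the paper against which to compare your proposal.

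As an independent sketch of Neumaier's argument your outline is in the right neighbourhood, but the two places where the constant $\tfrac12 m(m-1)(\mu+1)-1$ must actually be extracted are left as heuristics. In the first step you assert that a claw count yields ``if $\Gamma$ is not geometric then $\sigma\le\tfrac12 m(m-1)(\mu+1)-1$'', but you do not explain why there are at most $m$ lines through a vertex (this is where the hypothesis $\tau=-m$ enters, via an interlacing/Hoffman argument on the local graph), nor why the failure of those lines to be Delsarte cliques forces $\sigma$ small with exactly that constant. In the second step your appeal to ``the generalised-quadrangle bound $s\le t^{2}$'' to dispose of geometric graphs with $\alpha\le m-2$ is not the correct tool: that inequality is specific to $\alpha=1$, and there is no analogous feasibility bound that caps $\sigma$ for general $\alpha\le m-2$; in Neumaier's argument the elimination of small $\alpha$ comes from the same claw/line count, not from partial-geometry feasibility. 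Finally, the recognition of the two surviving cases as genuine nets and Steiner systems (rather than merely pseudo-geometric) does require Bose-type theorems for large $n$, which you flag but do not carry out. So the skeleton is right, but a reader could not reconstruct the proof from what you have written.
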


We use the above theorem as follows.
Let $m\geqslant2$ be an integer and let $\Gamma$ be a strongly regular graph with parameters $(v,k,\lambda,\mu)$ and smallest eigenvalue $-m$.  Assume that $\mu\neq m(m-1)$ and $\mu\neq m^2$, then by Theorem~\ref{SNB}, we have $\sigma\leqslant \frac{1}{2}m(m-1)(\mu+1)-1$.  Since $\lambda-\mu=\sigma-m$ and $k-\mu=\sigma m$ (\cite[p.\ 219]{GodsilRoyle}),  we have $\lambda=\sigma+\mu-m\leqslant\frac{1}{2}(m^2-m+2)(\mu-1)+m^2-2m$ and $k=\sigma m+\mu\leqslant \frac{1}{2}m^2(m-1)(\mu+1)-m+\mu$. Furthermore, one can see that $v=1+k+k(k-\lambda-1)/\mu$. Neumaier~\cite[Theorem 3.1]{Neumaier} showed that $\mu\leqslant m^3(2m-3)$ - we refer to this bound as the {\em $\mu$-bound}. 
The $\mu$-bound shows that there are finitely many such strongly regular graphs (for fixed $m$).  
For a pair $(\lambda,\mu)$ satisfying $\lambda=\sigma+\mu-m\leqslant\frac{1}{2}(m^2-m+2)(\mu-1)+m^2-2m$ and $\mu\leqslant m^3(2m-3)$, if the multiplicities of the eigenvalues of $\Gamma$ are integral and both the Krein condition~\cite[Lemma 2.1]{Neumaier} and the absolute bound~\cite[Lemma 2.2]{Neumaier} are satisfied, then we call the parameters $(v,k,\lambda,\mu)$ {\em feasible} for a strongly regular graph.

Let $\Gamma$ be a connected graph. A {\em clique} in $\Gamma$ is a set of pairwise adjacent vertices of $\Gamma$, and  a  {\em coclique} in $\Gamma$ is a set of pairwise non-adjacent vertices of $\Gamma$. The number of vertices in a clique or coclique is called the {\em order} of the clique or coclique. A clique $C$ in $\Gamma$ is called {\em maximal} if there is no clique in $\Gamma$ that contains $C$ and at least one other vertex of $V(\Gamma)\backslash C$.   A {\em complete graph} $K_n$ is a graph whose vertex set is a clique of order $n$. For a vertex $x$ of $\Gamma$, if $\Gamma(x)$ contains a coclique $\bar{C}$ of order $s$, then the subgraph induced on $\bar{C}\cup \{x\}$ is called the {\em $s$-claw}. The {\em adjacency matrix} $A=A(\Gamma)$ of $\Gamma$ is the matrix whose rows and columns are indexed by vertices of $\Gamma$ and the ($x, y$)-entry is $1$ whenever $x$ and $y$ are adjacent and $0$ otherwise. The {\em eigenvalues} of $\Gamma$ are the eigenvalues of $A$.

The following well-known result is called the {\em Interlacing Theorem}, and it shows that for a graph $\Gamma$, the eigenvalues of an induced subgraph of $\Gamma$ interlace the eigenvalues of $\Gamma$. 
\begin{theorem}\label{1}{\rm (Cf.\cite{haem})}
Let $m \leqslant n$ be two positive integers. Let
$A$ be an $n\times n$ matrix, that is similar to a (real) symmetric matrix, and let
$B$ be a principal $m \times m$ submatrix of $A$. Then, $$\theta_{n-m+i}(A)\leqslant \theta_i(B)\leqslant \theta_i(A)$$
holds for $i=1,\ldots , m$, where $A$ has eigenvalues $\theta_1(A) \geqslant \theta_2(A) \geqslant \ldots\geqslant \theta_n(A)$ and B has eigenvalues
$\theta_1(B) \geqslant \theta_2(B) \geqslant \ldots \geqslant \theta_m(B)$.\\
\end{theorem}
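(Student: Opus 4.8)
The statement is the Cauchy--Poincar\'e interlacing theorem, and the plan is to deduce it from the Courant--Fischer min--max characterisation of eigenvalues. The first reduction is to record that a principal $m\times m$ submatrix $B$ of a symmetric $A$ can be written as $B=S^{\top}AS$, where $S$ is the $n\times m$ matrix whose columns are the standard basis vectors $e_{j_1},\dots,e_{j_m}$ indexing the chosen rows and columns; in particular $S^{\top}S=I_m$, and $S$ maps $\mathbb{R}^m$ isomorphically onto the $m$-dimensional coordinate subspace $\mathcal{S}=\operatorname{col}(S)\subseteq\mathbb{R}^n$. Writing $R_A(x)=x^{\top}Ax/x^{\top}x$ for the Rayleigh quotient, the key identity is $R_B(u)=R_A(Su)$ for every nonzero $u\in\mathbb{R}^m$, which transports every optimisation over subspaces of $\mathbb{R}^m$ into one over subspaces of $\mathcal{S}$.

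For the upper bound $\theta_i(B)\leqslant\theta_i(A)$ I would apply Courant--Fischer in the form
\[
\theta_i(A)=\max_{\substack{U\subseteq\mathbb{R}^n\\ \dim U=i}}\ \min_{\substack{x\in U\\ x\neq 0}} R_A(x).
\]
Using the identity above, $\theta_i(B)$ equals the same maximum taken only over those $i$-dimensional subspaces $U$ that lie inside $\mathcal{S}$. Since this is a maximisation over a smaller family of subspaces, it cannot exceed $\theta_i(A)$, which is exactly the right-hand inequality. For the lower bound $\theta_{n-m+i}(A)\leqslant\theta_i(B)$ I would apply the upper bound already obtained to the pair $-A$, $-B$. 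Because $\theta_j(-A)=-\theta_{n+1-j}(A)$ and $\theta_j(-B)=-\theta_{m+1-j}(B)$, evaluating $\theta_j(-B)\leqslant\theta_j(-A)$ at the index $j=m+1-i$ gives $-\theta_i(B)\leqslant-\theta_{n-m+i}(A)$, which rearranges to the claim. The only care needed in this half is the index bookkeeping.

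The step I expect to be the genuine obstacle is the passage from a \emph{symmetric} $A$ to a matrix merely \emph{similar} to a symmetric one, since the Rayleigh-quotient argument uses the symmetry of $A$ directly. The natural attempt is to write $A=PMP^{-1}$ with $M$ symmetric and observe that $A$ is self-adjoint for the inner product $\langle x,y\rangle_G=x^{\top}Gy$ with $G=(PP^{\top})^{-1}$, so that a generalised Courant--Fischer characterisation is available for $A$. The difficulty is that restricting this form to the coordinate subspace $\mathcal{S}$ need not make the principal submatrix $B$ self-adjoint, because $(GA)$ restricted to the index set is not $G$ restricted times $A$ restricted; hence the variational argument does not transfer verbatim. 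This is precisely where one must be careful about exactly which principal submatrices are admissible. In the applications in this paper, however, $A$ is a genuine symmetric adjacency matrix, so the min--max proof of the two preceding paragraphs applies directly and no such reduction is required.
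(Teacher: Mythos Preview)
The paper does not supply a proof of this theorem; it is quoted from Haemers~\cite{haem} and used as a black box. So there is no ``paper's proof'' to compare against, and your Courant--Fischer argument for the symmetric case is the standard one and is correct as written: the identity $R_B(u)=R_A(Su)$ together with the min--max principle gives the upper inequality, and negation gives the lower one with the index shift you record.

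Your unease about the clause ``similar to a (real) symmetric matrix'' is well founded---in fact the statement is \emph{false} at that level of generality, so no argument will close the gap. Take
\[
A=\begin{pmatrix}3&2\\-1&0\end{pmatrix},
\]
which has characteristic polynomial $\lambda^2-3\lambda+2=(\lambda-1)(\lambda-2)$ and is therefore similar to the symmetric matrix $\operatorname{diag}(2,1)$. The principal $1\times 1$ submatrix $B=(3)$ has $\theta_1(B)=3>2=\theta_1(A)$, violating the asserted inequality. Thus the ``similar to symmetric'' hypothesis is too weak to force interlacing for arbitrary principal submatrices; what Haemers actually proves is interlacing for matrices of the form $S^{\top}AS$ with $S^{\top}S=I$ (and a variant for quotient matrices), which is exactly the symmetric case you handled.

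For the uses in this paper the issue is harmless: $A$ is always a genuine symmetric adjacency matrix, the principal submatrix is the adjacency matrix of an induced subgraph, and the quotient matrix of an equitable partition contributes eigenvalues that are already eigenvalues of the subgraph (Remark~\ref{equitable}). Your proof therefore covers everything the paper needs.
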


\begin{remark}\label{equitable}
A partition $\Pi=\{P_1, P_2, \ldots, P_m\}$ of the vertex set of a graph $\Gamma$ is called {\em equitable} if there exist non-negative integers $q_{ij}$ $(1\leqslant i, j \leqslant m)$ such that each vertex in $P_i$ has exactly $q_{ij}$ neighbors in $P_j$.  
Moreover, an eigenvalue of the {\em quotient matrix} $Q=(q_{ij})$ of the equitable partition $\Pi=\{P_1, P_2, \ldots, P_m\}$ is also an eigenvalue of the graph $\Gamma$~\cite[Lemma 2.3.1]{Spectra}. 
Together with Theorem~\ref{1}, this shows that eigenvalues of the quotient matrix of an equitable partition of the vertex set of an induced subgraph of the graph $\Gamma$ interlace those of $\Gamma$.
\end{remark}

\section{Maximal-clique polynomial}
\label{sec:poly}

In this section, we introduce a cubic polynomial (see Remark~\ref{maxclipoly}) that gives a new bound on the order of a maximal clique in an amply regular graph. Indeed, this cubic polynomial says that a maximal clique in an amply regular graph is either large or small (under certain assumptions). 

For positive integers $a$ and $t$, let us consider the graph with $a+t+1$ vertices consisting of a complete graph $K_{a+t}$ together with a vertex $x$ that is adjacent to precisely $a$ vertices of $K_{a+t}$. We denote this graph by $H(a,t)$.
Note that the vertex partition of $\Gamma = H(a,t)$ with parts $\{x\}$, $\Gamma(x)$ and their complement is equitable with quotient matrix
\[
  Q = \begin{bmatrix}
    0 & a & 0 \\
    1 & a-1 & t \\
    0 & a & t-1
  \end{bmatrix}.
\]

For a graph with smallest eigenvalue $-m$ containing $H(a,t)$ as an induced subgraph, the following lemma gives a relationship between $a, t$ and $m$.
\begin{lemma}\label{lem:hat}
  Let $\Gamma$ be a graph with smallest eigenvalue $-m$ that contains $H(a,t)$ as an induced subgraph.
  Then 
  \begin{equation}
    \label{eqn:atbound}
    (a-m(m-1))(t-(m-1)^2) \leqslant (m(m-1))^2.
  \end{equation}
\end{lemma}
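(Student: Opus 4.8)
The plan is to feed the equitable partition of $H(a,t)$ already exhibited before the lemma into eigenvalue interlacing, and thereby force a sign condition on the characteristic polynomial of the quotient matrix $Q$; expanding that polynomial at a single point will yield \eqref{eqn:atbound} verbatim.

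First I would localize the smallest eigenvalue of $Q$. Since $H(a,t)$ occurs as an induced subgraph of $\Gamma$, Theorem~\ref{1} tells us that the eigenvalues of $H(a,t)$ interlace those of $\Gamma$; in particular the smallest eigenvalue of $H(a,t)$ is at least the smallest eigenvalue $-m$ of $\Gamma$. The displayed three-part partition $\{x\}$, $\Gamma(x)$, $V(H(a,t))\setminus(\{x\}\cup\Gamma(x))$ is equitable for $H(a,t)$ with quotient matrix $Q$, so by Remark~\ref{equitable} the (real) eigenvalues of $Q$ are among the eigenvalues of $H(a,t)$ and hence also interlace those of $\Gamma$. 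Writing $\theta_1\geqslant\theta_2\geqslant\theta_3$ for the eigenvalues of $Q$, this gives $\theta_3\geqslant -m$.

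Next I would convert this spectral bound into the desired polynomial inequality. Because $-m$ lies at or below every $\theta_i$, each factor $\theta_i+m$ is nonnegative, and since $Q$ is $3\times 3$ we obtain
\[
  \det(Q+mI)=\prod_{i=1}^{3}(\theta_i+m)\geqslant 0.
\]
(A zero factor only makes the product vanish, so the nonstrict inequality $\theta_3\geqslant -m$ correctly yields $\geqslant$.) It then remains to expand the $3\times 3$ determinant of
\[
  Q+mI=\begin{bmatrix} m & a & 0 \\ 1 & a-1+m & t \\ 0 & a & t-1+m \end{bmatrix}
\]
and to check that it simplifies to
\[
  \det(Q+mI)=\bigl(m(m-1)\bigr)^{2}-\bigl(a-m(m-1)\bigr)\bigl(t-(m-1)^{2}\bigr).
\]
With this identity in hand, $\det(Q+mI)\geqslant 0$ is exactly \eqref{eqn:atbound}.

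Each of these steps is short, so there is no conceptual obstacle; the only point demanding care is the final determinant expansion and its factorization. When collecting the terms of total degree at most two in $m$, $a$, $t$ it is easy to misplace a sign or drop a cross-term, and the cancellation that turns $pa-am+a$ (with $p=m(m-1)$) into $a(m-1)^2$ is what makes the two-variable product appear, so I would verify that step explicitly rather than by inspection.
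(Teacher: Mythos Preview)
Your proposal is correct and follows essentially the same approach as the paper: invoke Remark~\ref{equitable} (together with interlacing) to conclude that the smallest eigenvalue of $Q$ is at least $-m$, deduce $\det(mI+Q)\geqslant 0$, and expand the $3\times 3$ determinant to obtain \eqref{eqn:atbound}. Your version simply spells out the determinant computation and its factorization that the paper leaves implicit.
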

\begin{proof}
Note that Remark~\ref{equitable} says that the smallest eigenvalue of $Q$ is at least $-m$. Thus $\det(mI+Q) \geqslant 0$, from which $(\ref{eqn:atbound})$ follows directly.
\end{proof}

In the following lemma, we show that the parameter $a$ in Lemma~\ref{lem:hat} is small when any two non-adjacent vertices of $\Gamma$ have few common neighbors.

\begin{lemma}\label{lem:aboundmu}
  Let $\Gamma$ be a graph with smallest eigenvalue $-m$ such that any two non-adjacent vertices have at most $\mu$ common neighbors.
  Suppose that  $\Gamma$ has a maximal clique $C$ of order $c\geqslant (m-1)(4m-1)$ and a vertex $y \not \in C$ with $a$ neighbors in $C$.
  If $\mu < \frac{c+m-1 + \sqrt{D}}{2}$ then $ a \leqslant \frac{c+m-1 - \sqrt{D}}{2}$, where $D=(c+m-1)(c-(m-1)(4m-1))$.
\end{lemma}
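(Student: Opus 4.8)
The plan is to extract two complementary facts about $a$: a ``dichotomy'' coming from the smallest-eigenvalue hypothesis via Lemma~\ref{lem:hat}, and an upper bound $a \leqslant \mu$ coming from the common-neighbour hypothesis. The first forces $a$ to lie outside the open interval bounded by the two numbers $\tfrac{c+m-1\mp\sqrt D}{2}$, and the second, when $\mu$ is small, eliminates the possibility that $a$ exceeds the larger of them.

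First I would observe that the subgraph of $\Gamma$ induced on $C \cup \{y\}$ is precisely $H(a, c-a)$: the clique $C$ has order $c = a + (c-a)$ and $y$ is adjacent to exactly $a$ of its vertices. Since $C$ is maximal and $y \notin C$, the vertex $y$ cannot be adjacent to all of $C$, so $a \leqslant c-1$ and the set $C_0$ of non-neighbours of $y$ in $C$ is non-empty (this is the only place maximality is used). Applying Lemma~\ref{lem:hat} with $t = c - a$ gives
\[
  (a - m(m-1))\bigl((c-a) - (m-1)^2\bigr) \leqslant (m(m-1))^2.
\]
Expanding the left-hand side and using $m(m-1) - (m-1)^2 = m-1$, this rearranges (routine algebra) to
\[
  a^2 - (c+m-1)a + m(m-1)(c+m-1) \geqslant 0.
\]
The quadratic $f(x) = x^2 - (c+m-1)x + m(m-1)(c+m-1)$ has discriminant exactly $D = (c+m-1)(c-(m-1)(4m-1))$, which is non-negative because $c \geqslant (m-1)(4m-1)$; its roots are therefore the real numbers $r_\pm = \tfrac{c+m-1\pm\sqrt D}{2}$. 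Hence $f(a)\geqslant 0$ means that either $a \leqslant r_-$ or $a \geqslant r_+$.

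It remains to rule out the branch $a \geqslant r_+$ under the hypothesis $\mu < r_+$. For this I would pick any $z \in C_0$. Because $z$ lies in the clique $C$, it is adjacent to each of the $a$ neighbours of $y$ in $C$, and these are by definition also adjacent to $y$; thus $y$ and $z$ share at least $a$ common neighbours. As $y$ and $z$ are non-adjacent, the hypothesis gives $a \leqslant \mu$. Combining with $\mu < r_+$ yields $a < r_+$, which together with the dichotomy $f(a)\geqslant 0$ forces $a \leqslant r_- = \tfrac{c+m-1-\sqrt D}{2}$, as required. The only genuinely delicate point is the algebraic rearrangement of the inequality from Lemma~\ref{lem:hat} into the clean form $f(a)\geqslant 0$ together with the verification that the discriminant of $f$ is exactly the stated $D$; the remaining steps are short structural observations.
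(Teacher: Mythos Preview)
Your proof is correct and follows essentially the same route as the paper's: apply Lemma~\ref{lem:hat} to the induced $H(a,c-a)$ to obtain the quadratic inequality $a^2-(c+m-1)a+m(m-1)(c+m-1)\geqslant 0$, use $c\geqslant(m-1)(4m-1)$ to guarantee real roots $r_\pm$, and then invoke maximality of $C$ to get $a\leqslant\mu$ and hence exclude the branch $a\geqslant r_+$. Your argument is in fact slightly more explicit than the paper's, since you spell out why maximality yields $a\leqslant\mu$ (via a non-neighbour $z\in C$ of $y$), whereas the paper simply asserts it.
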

\begin{proof}
  Note that $\Gamma$ contains $H(a,c-a)$ as an induced subgraph. Then \eqref{eqn:atbound} becomes
  \[
   a^2-a(c+m-1)+m(m-1)(c+m-1) \geqslant 0
  \]
  Since $c\geqslant (m-1)(4m-1)$, we know that $(c+m-1)(c-(m-1)(4m-1))\geqslant 0$. Hence either
  \[
    a \leqslant \frac{c+m-1 - \sqrt{(c+m-1)(c-(m-1)(4m-1))}}{2}
  \]
  or
  \[
    a \geqslant \frac{c+m-1 + \sqrt{(c+m-1)(c-(m-1)(4m-1))}}{2}
  \]
  holds. But, since $C$ is maximal, we must have $a \leqslant \mu$.
\end{proof}

We apply the proof of Lemma~\ref{lem:aboundmu} to strongly regular graphs having a Delsarte clique to obtain the following result.

\begin{proposition}
Let $\Gamma$ be a strongly regular graph with parameters $(v,k,\lambda,\mu)$ having smallest eigenvalue $-m$. Assume that $k\geqslant m^2(4m-5)$ and that $\Gamma$ contains a Delsarte clique. Then either
\begin{center}
$\mu \leqslant  \frac{k+m^2 - \sqrt{(k+m^2)(k-4m^3+5m^2)}}{2}$~ or ~$\mu \geqslant \frac{k+m^2 + \sqrt{(k+m^2)(k-4m^3+5m^2)}}{2}$.
\end{center}
\end{proposition}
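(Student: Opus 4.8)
The plan is to reuse verbatim the quadratic computation in the proof of Lemma~\ref{lem:aboundmu}, applied to the Delsarte clique $C$, but supplying the correct value of the neighbour‑count $a$. Since $C$ has order $c=1+k/m$, the hypothesis $k\geqslant m^2(4m-5)$ is exactly the condition $c\geqslant(m-1)(4m-1)$ that made the discriminant in Lemma~\ref{lem:aboundmu} nonnegative, so the two stated branches will be the two roots of the same quadratic with this value of $c$. The only genuinely new input I need is the number $a$ of neighbours in $C$ of a vertex $y\notin C$, and I claim this is constant and equal to $\mu/m$.

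To establish this I would invoke the standard fact that a Delsarte clique is a \emph{regular} clique, i.e. the partition $\{C,\,V(\Gamma)\setminus C\}$ is equitable (equality in the clique bound forces $\chi_C$ to have zero component in the $(-m)$-eigenspace, which makes the partition equitable). Writing $e$ for the common nexus, the quotient matrix $\begin{bmatrix} c-1 & k-c+1 \\ e & k-e \end{bmatrix}$ has row sums $k$, so its eigenvalues are $k$ and $c-1-e$. The latter must be the positive nontrivial eigenvalue $\sigma=(k-\mu)/m$ of $\Gamma$, since the alternative value $-m$ would force the impossible $e=c-1+m>c$. Using $c-1=k/m$ this yields $e=c-1-\sigma=\mu/m$; in particular $m\mid\mu$, and because $1\leqslant\mu/m\leqslant c-1$ (which holds as $1\leqslant\mu\leqslant k$ for a connected, non-complete strongly regular graph) both $a=\mu/m$ and $t=c-\mu/m$ are positive integers.

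With $a=\mu/m$ in hand, $\Gamma$ contains $H(\mu/m,\,c-\mu/m)$ as an induced subgraph, so Lemma~\ref{lem:hat} applies. Substituting $a=\mu/m$ and $t=c-\mu/m$ into $(\ref{eqn:atbound})$ and clearing the denominator $m^2$, the inequality becomes $\mu^2-m(c+m-1)\mu+m^3(m-1)(c+m-1)\geqslant 0$, which is precisely the quadratic from the proof of Lemma~\ref{lem:aboundmu} rescaled by the substitution $a=\mu/m$. Finally I would put $c=1+k/m$, so that $m(c+m-1)=k+m^2$; a direct discriminant computation then gives $(k+m^2)^2-4m^2(m-1)(k+m^2)=(k+m^2)(k-4m^3+5m^2)$, and the asserted dichotomy is just the statement that $\mu$ lies outside the open interval between the two roots of this quadratic.

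I expect the only real content to be the identification $a=\mu/m$ as the nexus of the Delsarte clique; once that regular‑clique fact is recorded, everything else is the same algebra as in Lemma~\ref{lem:aboundmu}, together with the remark that $k\geqslant m^2(4m-5)$ keeps the discriminant nonnegative so that both branches are real. The one routine point worth verifying is that the relevant $H(a,t)$ really occurs, i.e. that $a$ and $t$ are positive integers, which follows from $1\leqslant\mu/m\leqslant c-1$ as noted above.
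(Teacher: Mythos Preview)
Your proof is correct and follows essentially the same route as the paper: both identify the neighbour count $a=\mu/m$ for a vertex outside the Delsarte clique, feed this into the quadratic inequality from Lemma~\ref{lem:hat}/Lemma~\ref{lem:aboundmu}, and substitute $c=1+k/m$ to obtain the stated dichotomy. The only difference is cosmetic---the paper simply cites \cite[Proposition~1.3.2]{bcn89} for the nexus value $\mu/m$, whereas you supply the standard equitable-partition argument for it.
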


\begin{proof}
Let $C$ be a Delsarte clique in $\Gamma$. 
Then $C$ has order $c=1+\frac{k}{m}$.
By \cite[Proposition 1.3.2]{bcn89}, every vertex outside $C$ has $\frac{\mu}{m}$ neighbors in $C$. 
Since $k\geqslant m^2(4m-5)=m(m-1)(4m-1)-m$, we know that $c=1+\frac{k}{m}\geqslant (m-1)(4m-1)$. 
Then from the proof of Lemma~\ref{lem:aboundmu}, we know that either  
\begin{center}
$\frac{\mu}{m} \leqslant \frac{c+m-1 - \sqrt{(c+m-1)(c-(m-1)(4m-1))}}{2}$~  or  ~$\frac{\mu}{m} \geqslant \frac{c+m-1 + \sqrt{(c+m-1)(c-(m-1)(4m-1))}}{2}$.
\end{center}
Replace $c$ by $1+\frac{k}{m}$ to obtain that either
\begin{center}
    $\mu \leqslant \frac{k+m^2 - \sqrt{(k+m^2)(k-4m^3+5m^2)}}{2}$~ or  ~$\mu \geqslant \frac{k+m^2 + \sqrt{(k+m^2)(k-4m^3+5m^2)}}{2}$.
\end{center}
This finishes the proof.
\end{proof}

Next we provide a technical lemma for adjacency for cliques in edge-regular graphs.

\begin{lemma}\label{lem:w}
  Let $\Gamma$ be an edge-regular graph with parameters $(v,k,\lambda)$ having a clique $C$ of order $c$.  For a vertex $x$ in $ C$, if every vertex in $\Gamma(x) \backslash C$ has at most $n$ neighbors in $C \backslash \{x \}$, then 
  \[
    \frac{(c-1)(\lambda-(c-2))}{(k-(c-1))} \leqslant n.
  \]
\end{lemma}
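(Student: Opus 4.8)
The plan is to prove the inequality by counting, in two different ways, the edges joining the two natural parts of the neighbourhood of $x$. First I would fix the set-up. Since $C$ is a clique and $x\in C$, every other vertex of $C$ is adjacent to $x$, so $C\backslash\{x\}\subseteq\Gamma(x)$ and $\Gamma(x)\cap C=C\backslash\{x\}$, a set of $c-1$ vertices (here I use that $x$ is not its own neighbour). Because $\Gamma$ is regular of valency $k$, the complementary set $\Gamma(x)\backslash C$ then has exactly $k-(c-1)$ vertices. These two sets partition $\Gamma(x)$, and the quantity I will count twice is the number $E$ of edges between $C\backslash\{x\}$ and $\Gamma(x)\backslash C$.

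The key local computation is to determine, for a fixed $y\in C\backslash\{x\}$, how many neighbours $y$ has inside $\Gamma(x)\backslash C$. Since $x$ and $y$ are adjacent, edge-regularity gives exactly $\lambda$ common neighbours of $x$ and $y$. Among these, the common neighbours lying in $C$ are precisely the vertices of $C\backslash\{x,y\}$, since every such vertex is adjacent to both $x$ and $y$ as $C$ is a clique; hence there are exactly $c-2$ of them. The remaining $\lambda-(c-2)$ common neighbours of $x$ and $y$ lie outside $C$, and being adjacent to $x$ they belong to $\Gamma(x)\backslash C$. Conversely any neighbour of $y$ in $\Gamma(x)\backslash C$ is automatically a common neighbour of $x$ and $y$ outside $C$, so $y$ has exactly $\lambda-(c-2)$ neighbours in $\Gamma(x)\backslash C$. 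Summing over the $c-1$ choices of $y$ yields $E=(c-1)(\lambda-(c-2))$.

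For the reverse count I would invoke the hypothesis directly: each $w\in\Gamma(x)\backslash C$ has at most $n$ neighbours in $C\backslash\{x\}$, so summing over the $k-(c-1)$ such $w$ gives $E\leqslant(k-(c-1))\,n$. Combining the two evaluations produces $(c-1)(\lambda-(c-2))\leqslant(k-(c-1))\,n$, and dividing by $k-(c-1)$ gives the claimed bound.

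I do not expect a genuine obstacle here; the proof is a clean double count, and the only points needing care are bookkeeping. The first is confirming that \emph{exactly} $c-2$ common neighbours of $x$ and $y$ lie in $C$, so that the external count is exactly $\lambda-(c-2)$ (in particular nonnegative, which is automatic because those $c-2$ clique vertices are themselves among the $\lambda$ common neighbours). The second is ensuring the denominator $k-(c-1)$ is positive before dividing; this holds as soon as $\Gamma(x)\backslash C$ is nonempty, which is the only case in which the statement is meaningful, and nonemptiness forces $k\geqslant c$ so that $k-(c-1)>0$.
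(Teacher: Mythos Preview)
Your proof is correct and follows essentially the same double-counting argument as the paper: both count the edges between $C\backslash\{x\}$ and $\Gamma(x)\backslash C$ from each side to obtain $(c-1)(\lambda-(c-2))\leqslant n(k-(c-1))$. Your write-up is in fact more careful about bookkeeping (verifying that exactly $c-2$ common neighbours lie in $C$ and noting the need for $k-(c-1)>0$ before dividing), but the method is identical.
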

\begin{proof}
 Note that every vertex in $\Gamma(x)$ has $\lambda$ neighbors in $\Gamma(x)$. 
 Then every vertex in $C \backslash \{ x \}$ has $\lambda-(c-2)$ neighbors in $\Gamma(x) \backslash C$. 
 Thus, there are $(c-1)(\lambda-(c-2))$ edges between $C \backslash \{ x \}$ and $\Gamma(x) \backslash C$. 
 Since every vertex in $\Gamma(x) \backslash C$ has at most $n$ neighbors in $C \backslash \{x \}$, we have $(c-1)(\lambda-(c-2)) \leqslant n(k-(c-1))$, as required.
\end{proof}

As an immediate consequence of Lemma~\ref{lem:w}, we have the following corollary.

\begin{corollary}\label{boundonmu}
Let $\Gamma$ be an amply regular graph with parameters $(v,k,\lambda, \mu)$ having a clique $C$ of order $c$. Then 
\[
    \frac{(c-1)(\lambda-(c-2))}{(k-(c-1))} \leqslant \mu-1.
  \]
\end{corollary}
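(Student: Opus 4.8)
The plan is to derive Corollary~\ref{boundonmu} directly from Lemma~\ref{lem:w} by checking that one may take $n=\mu-1$ there. Concretely, I would fix a vertex $x\in C$ and reduce everything to the single estimate that every vertex $y\in\Gamma(x)\setminus C$ has at most $\mu-1$ neighbours in $C\setminus\{x\}$. Once this is established, substituting $n=\mu-1$ into the conclusion of Lemma~\ref{lem:w} yields exactly the asserted inequality $\frac{(c-1)(\lambda-(c-2))}{k-(c-1)}\leqslant\mu-1$, so no further computation is needed. The entire content thus sits in the counting bound on $|\Gamma(y)\cap(C\setminus\{x\})|$, and this is precisely where the amply regular hypothesis (as opposed to mere edge-regularity, which was all that Lemma~\ref{lem:w} used) enters.

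For that estimate I would argue as follows. Take $y\in\Gamma(x)\setminus C$ and suppose there is a vertex $w\in C$ with $w\not\sim y$; since $y\sim x$ we have $w\neq x$. The vertices $y$ and $w$ are non-adjacent yet share the common neighbour $x$ (because $x\sim y$ and $x\sim w$), so they lie at distance $2$ and hence have exactly $\mu$ common neighbours. Now any neighbour $u$ of $y$ lying in $C$ satisfies $u\neq w$ (as $w\not\sim y$) and is therefore adjacent to $w$, $C$ being a clique; consequently $\Gamma(y)\cap C\subseteq\Gamma(y)\cap\Gamma(w)$, whence $|\Gamma(y)\cap C|\leqslant\mu$. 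Since $x$ itself belongs to $\Gamma(y)\cap C$, deleting it gives $|\Gamma(y)\cap(C\setminus\{x\})|\leqslant\mu-1$, which is what the application of Lemma~\ref{lem:w} requires.

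The step demanding the most care, and the one I expect to be the main obstacle, is guaranteeing the existence of the non-neighbour $w\in C$ of $y$ in the first place. This is automatic exactly when $C$ is a maximal clique: then $C\cup\{y\}$ is not a clique for any $y\notin C$, so some vertex of $C$ misses $y$ and the argument above applies to every external neighbour of $x$. Without maximality the reduction genuinely fails, since a vertex $y$ adjacent to all of $C$ would have $c-1$ neighbours in $C\setminus\{x\}$, and $c-1$ can exceed $\mu-1$; so I would either invoke maximality of $C$ explicitly or restrict attention to those external vertices missing at least one vertex of $C$. With this point secured, the corollary follows immediately from Lemma~\ref{lem:w}.
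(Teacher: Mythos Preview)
Your approach is essentially identical to the paper's: fix $x\in C$, show that every $y\in\Gamma(x)\setminus C$ has at most $\mu-1$ neighbours in $C\setminus\{x\}$ by producing a non-neighbour $z\in C$ of $y$ and using $d(y,z)=2$, then invoke Lemma~\ref{lem:w} with $n=\mu-1$. Your caution about maximality is well placed: the paper's proof simply asserts ``there is a vertex $z$ in $C\setminus\{x\}$ such that $z$ is not adjacent to $y$'' without further comment, so it is tacitly relying on exactly the hypothesis you flag.
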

\begin{proof}
Let $x$ be a vertex in $C$ and let $y$ be a vertex in $\Gamma(x)\backslash C$. 
Note that there is a vertex $z$ in $C \backslash \{x\}$ such that $z$ is not adjacent to $y$. 
Then, since $d(y,z)=2$, the vertex $y$ has at most $\mu-1$ neighbors in $\Gamma(x)\backslash C$. 
By Lemma~\ref{lem:w}, we obtain that $\mu-1$ is at least $\frac{(c-1)(\lambda-(c-2))}{(k-(c-1))}$, as required.
\end{proof}

We combine Lemma~\ref{lem:aboundmu} and Lemma~\ref{lem:w} to establish the following lemma.

\begin{lemma}\label{lem:srg1}
  Let $\Gamma$ be an amply regular graph with parameters $(v,k,\lambda,\mu)$ having smallest eigenvalue $-m$.
  Suppose that $\Gamma$ has a maximal clique of order $c\geqslant(m-1)(4m-1)$ such that $\mu < \frac{c+m-1 + \sqrt{D}}{2}$, where $D=(c+m-1)(c-(m-1)(4m-1))$.
  Then 
 \begin{equation} \label{ineq:polybound1}
   (c+m-3)(k-c+1)-2(c-1)(\lambda-c+2)\geqslant(k-c+1)\sqrt{D}.
  \end{equation}
\end{lemma}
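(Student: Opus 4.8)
The plan is to derive~\eqref{ineq:polybound1} by concatenating Lemma~\ref{lem:aboundmu} with Lemma~\ref{lem:w}. First I would check that the hypotheses of Lemma~\ref{lem:aboundmu} are available. Since $\Gamma$ is amply regular, any two non-adjacent vertices have exactly $\mu$ common neighbours, hence at most $\mu$; together with the standing assumptions $c\geqslant(m-1)(4m-1)$ and $\mu<\frac{c+m-1+\sqrt{D}}{2}$, this puts us precisely in the situation of Lemma~\ref{lem:aboundmu}. That lemma applies to \emph{every} vertex outside the maximal clique $C$, so I may record that each $y\notin C$ has at most $\frac{c+m-1-\sqrt{D}}{2}$ neighbours in $C$.

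Next I would fix a vertex $x\in C$ in order to feed Lemma~\ref{lem:w}. Because $x$ has exactly $c-1$ neighbours inside $C$ and valency $k$, it has exactly $k-(c-1)$ neighbours in $\Gamma(x)\backslash C$; I may assume $k>c-1$, for otherwise $\Gamma$ is a complete graph and~\eqref{ineq:polybound1} holds trivially (both sides vanish). Now take any $y\in\Gamma(x)\backslash C$. By the previous paragraph $y$ has at most $\frac{c+m-1-\sqrt{D}}{2}$ neighbours in $C$, and since $x$ itself is one of these, $y$ has at most
\[
  n:=\frac{c+m-1-\sqrt{D}}{2}-1=\frac{c+m-3-\sqrt{D}}{2}
\]
neighbours in $C\backslash\{x\}$. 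This is exactly the hypothesis of Lemma~\ref{lem:w} for this value of $n$, which yields
\[
  \frac{(c-1)(\lambda-(c-2))}{k-(c-1)}\leqslant\frac{c+m-3-\sqrt{D}}{2}.
\]

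Finally I would clear denominators: multiplying through by $2(k-c+1)>0$ gives $2(c-1)(\lambda-c+2)\leqslant(c+m-3-\sqrt{D})(k-c+1)$, and isolating the term carrying $\sqrt{D}$ rearranges this directly into~\eqref{ineq:polybound1}. I do not anticipate a genuine obstacle: the result is a clean composition of the two preceding lemmas. The only points needing care are the harmless reduction to the case $k>c-1$, the bookkeeping of the single neighbour $x$ that shifts the bound from $\frac{c+m-1-\sqrt{D}}{2}$ to $n$, and the observation that $n$ may be used as a (possibly non-integer) real upper bound in Lemma~\ref{lem:w}, which is legitimate since that lemma uses $n$ only to bound an edge count.
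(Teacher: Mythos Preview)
Your proof is correct and follows essentially the same route as the paper: both arguments chain Lemma~\ref{lem:aboundmu} with Lemma~\ref{lem:w}, subtracting one for the common neighbour $x$ and then clearing the denominator $k-c+1$. The paper phrases it by first bounding $n=\max_y n_y$ from below via Lemma~\ref{lem:w} and then from above via Lemma~\ref{lem:aboundmu}, whereas you bound every $n_y$ uniformly first and then invoke Lemma~\ref{lem:w}; the content is the same, and your explicit treatment of the degenerate case $k=c-1$ is a harmless extra.
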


\begin{proof}
 Let $x$ be a vertex in $C$. For a vertex $y$ in $\Gamma(x) \backslash C$, we denote the number of neighbors of $y$ in $C\backslash\{x\}$ by $n_y$. Let $n={\rm max}\{n_y ~|~ y\in \Gamma(x) \backslash C\}$. 
 Then by  Lemma~\ref{lem:w}, we have  $\frac{(c-1)(\lambda-(c-2))}{(k-(c-1))} \leqslant n$. 
 Let $z$ be a vertex in $\Gamma(x)\backslash C$ having $n$ neighbors in $C\backslash\{x\}$, that is, the vertex $z$ has $n+1$ neighbors in $C$ including $x$. 
 By Lemma~\ref{lem:aboundmu}, we have $n+1\leqslant \frac{c+m-1 - \sqrt{D}}{2}$. 
 Thus, we obtain that 
$$\frac{(c-1)(\lambda-(c-2))}{(k-(c-1))}+1 \leqslant n+1\leqslant \frac{c+m-1 - \sqrt{D}}{2},$$ from which $(\ref{ineq:polybound1})$ follows directly.
\end{proof}

Alternatively, Lemma~\ref{lem:srg1} can be written as the following lemma.

\begin{lemma}\label{lem:srg2}
  Let $\Gamma$ be an amply regular graph with parameters $(v,k,\lambda,\mu)$ having smallest eigenvalue $-m$ such that $\mu > m(m-1)$.
  Suppose that $\Gamma$ has a maximal clique $C$ of order $c > \frac{ \mu^2}{\mu-m(m-1)}-m+1$.
  Then 
 \begin{equation}
    \label{ineq:polybound2}
    ((c+m-3)(k-c+1)-2(c-1)(\lambda-c+2))^2-(k-c+1)^2(c+m-1)(c-(m-1)(4m-1)) \geqslant 0.
  \end{equation}
\end{lemma}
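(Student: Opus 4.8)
The plan is to derive this lemma directly from Lemma~\ref{lem:srg1}, exploiting the fact (already flagged in the text) that the two statements are reformulations of one another. The inequality in Lemma~\ref{lem:srg1} has the shape $L \geqslant (k-c+1)\sqrt{D}$, where I abbreviate $L = (c+m-3)(k-c+1)-2(c-1)(\lambda-c+2)$; squaring this and transposing $(k-c+1)^2 D$ produces exactly \eqref{ineq:polybound2}. So the real content is to check that the single hypothesis on the clique order $c > \frac{\mu^2}{\mu-m(m-1)}-m+1$ (together with $\mu > m(m-1)$) implies both hypotheses of Lemma~\ref{lem:srg1}, and that the sign conditions needed to square are met.

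The key device is to set $s = c+m-1$ and introduce the quadratic $f(x) = x^2 - sx + sm(m-1)$. A short computation shows that the discriminant of $f$ equals $s\bigl(s-4m(m-1)\bigr) = (c+m-1)(c-(m-1)(4m-1)) = D$, and that its larger root is $\tfrac{s+\sqrt{D}}{2} = \tfrac{c+m-1+\sqrt{D}}{2}$. First I would translate the order hypothesis: since $\mu > m(m-1)$ makes the denominator positive, clearing it turns $c+m-1 > \frac{\mu^2}{\mu-m(m-1)}$ into $s(\mu-m(m-1)) > \mu^2$, i.e.\ $f(\mu) < 0$. This single inequality does all the work. Because $f$ takes a negative value at the real point $\mu$, its discriminant must be positive, so $D > 0$; as $s>0$ this forces $s > 4m(m-1)$, which rearranges to $c > (m-1)(4m-1)$, the order hypothesis of Lemma~\ref{lem:srg1}. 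At the same time $f(\mu) < 0$ places $\mu$ strictly between the two real roots $\tfrac{s\pm\sqrt{D}}{2}$, so in particular $\mu < \tfrac{c+m-1+\sqrt{D}}{2}$, which is the remaining hypothesis.

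With both hypotheses verified, Lemma~\ref{lem:srg1} gives $L \geqslant (k-c+1)\sqrt{D}$. To pass to \eqref{ineq:polybound2} I would square, which is valid once both sides are nonnegative. The right-hand side is nonnegative because $D \geqslant 0$ and $k-c+1 \geqslant 0$ (any clique has order at most $k+1$, since every vertex of $C$ has its $c-1$ clique-neighbours among its $k$ neighbours), and then $L$ is nonnegative too, being at least the right-hand side. Squaring and moving $(k-c+1)^2 D$ to the left yields precisely \eqref{ineq:polybound2}.

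The step I would check most carefully — and the only genuine subtlety — is the equivalence between the order condition and $f(\mu) < 0$, together with the observation that a single negative value of $f$ simultaneously forces $D > 0$ (hence the lower bound on $c$ needed by Lemma~\ref{lem:srg1}) and pins $\mu$ below the larger root. Once this bookkeeping is in place, the passage to the squared form is routine, with the clique bound $c \leqslant k+1$ supplying the sign that legitimises squaring.
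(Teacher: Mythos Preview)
Your proposal is correct and follows essentially the same route as the paper: both arguments translate the clique-order hypothesis into the quadratic inequality $\mu^2 - (c+m-1)\mu + (c+m-1)m(m-1) < 0$ (the paper writes this as $(2\mu-c-m+1)^2 < D$, you as $f(\mu)<0$), deduce from it both $c > (m-1)(4m-1)$ and $\mu < \tfrac{c+m-1+\sqrt{D}}{2}$, and then invoke Lemma~\ref{lem:srg1}. Your explicit justification of the squaring step via $c\leqslant k+1$ is a detail the paper leaves implicit, so your version is in fact slightly more careful.
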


\begin{proof}

Let $D=(c+m-1)(c-(m-1)(4m-1))$.  
Since $\mu > m(m-1)$, the inequality $c > \frac{ \mu^2}{\mu-m(m-1)}-m+1$ implies that $(\mu-m(m-1))c>\mu^2-(m-1)(\mu-m(m-1))$, which is equivalent to $(2\mu-c-m+1)^2<D$. 
Hence $2\mu-c-m+1\leqslant|2\mu-c-m+1|<\sqrt{D}$, that is, $\mu < \frac{c+m-1 + \sqrt{D}}{2}$.
Since $(\mu-2m(m-1))^2\geqslant 0$, we have that $\mu^2\geqslant 4m(m-1)(\mu-m(m-1))$. 
Then $\mu>m(m-1)$ implies that $\frac{\mu^2}{\mu-m(m-1)}\geqslant 4m(m-1)$, and this shows that $c>\frac{\mu^2}{\mu-m(m-1)}-m+1 \geqslant (m-1)(4m-1)$.  
Now we can apply Lemma~\ref{lem:srg1}, to obtain the inequality $(\ref{ineq:polybound1})$, and this implies the inequality $(\ref{ineq:polybound2})$, as required.
\end{proof}

\begin{remark}\label{maxclipoly}
We denote the polynomial on the left hand side of the inequality \eqref{ineq:polybound2}  by $M_\Gamma(c)$, and we will call it the {\em maximal-clique polynomial}. Note that $M_\Gamma(c)$ is a cubic polynomial in the variable $c$ and that the leading coefficient of $M_\Gamma(c)$ is positive.
\end{remark}


\section{The claw-bound and cliques}
\label{sec:claw}

In this section, we recall the {\em claw-bound} which was found by several authors \cite[Section 3]{KIPR}. 
From the claw-bound, we will show that if an amply regular graph  $\Gamma$ with parameters $(v,k,\lambda,\mu)$ does not contain a coclique of order $\bar{c}$ in a local graph (a graph induced on the set of neighbors of a vertex), then $\Gamma$ contains a clique of order at least $2+\lambda-(\bar{c}-2)(\mu-1)$.

The claw-bound is given below, and it follows from the principle of inclusion and exclusion (\cite{Gav,KP}).
\begin{lemma}\label{clawbound}{\rm (Cf.\cite{Gav,KP})}
Let $\Gamma$ be an amply regular graph with parameters $(v,k,\lambda,\mu)$. Let $x$ be a vertex of $\Gamma$ and let $\bar{C}$ be a coclique of order $\bar{c}\geqslant2$ in $\Gamma(x)$. Then 

$${\bar{c} \choose 2}(\mu-1) \geqslant \bar{c}(\lambda+1)-k$$
\end{lemma}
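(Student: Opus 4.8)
The plan is to prove the inequality by counting vertices inside the neighbourhood $\Gamma(x)$ and invoking the inclusion--exclusion principle in the form of a Bonferroni lower bound. Write $\bar{C}=\{y_1,\dots,y_{\bar c}\}$, and for each $i$ set $N_i=\Gamma(x)\cap\Gamma(y_i)$, the set of common neighbours of $x$ and $y_i$. The idea is that the sets $N_i$, together with $\bar C$ itself, all live inside $\Gamma(x)$, which has only $k$ vertices; this scarcity of room forces the $N_i$ to overlap enough that their pairwise intersections (bounded via $\mu$) must account for the deficit, which is exactly what the claimed bound records.

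The computation rests on three counts. First, since $x$ and $y_i$ are adjacent, edge-regularity gives $|N_i|=\lambda$. Second, for $i\ne j$ the vertices $y_i$ and $y_j$ are non-adjacent (they lie in a coclique) yet share the common neighbour $x$, so they are at distance $2$ and hence have exactly $\mu$ common neighbours; as $x$ is one of these but $x\notin\Gamma(x)$, we obtain $|N_i\cap N_j|=|\Gamma(x)\cap\Gamma(y_i)\cap\Gamma(y_j)|\le\mu-1$. Third, each $y_i$ lies in $\Gamma(x)$ but in none of the $N_j$: it is absent from $N_i$ (no vertex is its own neighbour) and from $N_j$ for $j\ne i$ (since $y_i$ and $y_j$ are non-adjacent). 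Consequently $\bar C$ is disjoint from $\bigcup_i N_i$, and both sets are contained in $\Gamma(x)$.

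Assembling these, the Bonferroni inequality yields
$$\Big|\bigcup_i N_i\Big|\ \geqslant\ \sum_i|N_i|-\sum_{i<j}|N_i\cap N_j|\ \geqslant\ \bar c\,\lambda-\binom{\bar c}{2}(\mu-1),$$
while disjointness from $\bar C$ inside $\Gamma(x)$ gives $\big|\bigcup_i N_i\big|\leqslant k-\bar c$. Chaining these two inequalities and rearranging produces $\bar c(\lambda+1)-k\leqslant\binom{\bar c}{2}(\mu-1)$, as desired. The argument is essentially a careful count, so the only genuine pitfalls are bookkeeping ones: tracking that $x$ must be excised from each pairwise intersection (the source of the $\mu-1$) and that the $\bar c$ coclique vertices occupy further slots of $\Gamma(x)$ disjoint from the $N_i$ (the source of the extra $+\bar c$, hence $\lambda+1$ rather than $\lambda$). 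One should also confirm that the truncated inclusion--exclusion sum is applied in the correct, lower-bound direction.
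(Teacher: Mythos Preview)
Your proof is correct and follows exactly the approach indicated in the paper, which states that the lemma follows from the principle of inclusion and exclusion (citing \cite{Gav,KP}) without spelling out the details. Your Bonferroni count of the sets $N_i=\Gamma(x)\cap\Gamma(y_i)$ inside $\Gamma(x)$, together with the observations that $|N_i|=\lambda$, $|N_i\cap N_j|\leqslant\mu-1$, and $\bar C$ is disjoint from $\bigcup_i N_i$, is precisely the intended argument.
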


\begin{remark}\label{clawclique}
Lemma~\ref{clawbound} says that if ${\bar{c} \choose 2}(\mu-1)<\bar{c}(\lambda+1)-k$, then the graph induced on $\Gamma(x)$ contains no coclique of order $\bar{c}$, i.e., the graph $\Gamma$ does not contain a $\bar{c}$-claw.
\end{remark}

From the claw-bound, we give a bound on the order of a clique in an amply regular graph when the graph does not contain a $\bar{c}$-claw.

\begin{lemma}\label{largeclique1}
Let $\Gamma$ be an amply regular graph with parameters $(v,k,\lambda,\mu)$. If ${\bar{c} \choose 2}(\mu-1) < \bar{c}(\lambda+1)-k$ for some integer $\bar{c}\geqslant 2$, then $\Gamma$ contains a clique of order at least $2+\lambda-(\bar{c}-2)(\mu-1)$.
\end{lemma}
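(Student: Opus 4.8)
The first thing I would do is convert the hypothesis into a clean structural condition: by Remark~\ref{clawclique} the inequality $\binom{\bar c}{2}(\mu-1)<\bar c(\lambda+1)-k$ says exactly that $\Gamma$ has no $\bar c$-claw, i.e.\ every local graph $\Gamma(x)$ has independence number at most $\bar c-1$. I would then try to exhibit the required clique as a clique $C$ of \emph{maximum} order $c$, and reduce the whole statement to the single inequality $\lambda-c+2\leqslant(\bar c-2)(\mu-1)$. This reduction is immediate: fixing an edge $\{x,y\}$ of $C$, the common neighbours of $x$ and $y$ number $\lambda$, of which exactly $c-2$ lie in $C$; hence the set $B$ of common neighbours of $x,y$ lying outside $C$ has $|B|=\lambda-c+2$, and the conclusion $c\geqslant 2+\lambda-(\bar c-2)(\mu-1)$ is equivalent to $|B|\leqslant(\bar c-2)(\mu-1)$.

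I would then assemble two tools. The first is the local $\mu$-bound underlying Lemma~\ref{lem:w} and Corollary~\ref{boundonmu}: since $C$ is maximal, every $t\in\Gamma(x)\setminus C$ has a non-neighbour $z\in C\setminus\{x\}$, and because $x$ is a common neighbour of the non-adjacent pair $\{t,z\}$, each neighbour of $t$ inside the clique $C\setminus\{x\}$ is a common neighbour of $t$ and $z$ distinct from $x$; this yields at most $\mu-1$ such neighbours. Applied to $b\in B$ (which is adjacent to $y$), it shows $b$ meets $C\setminus\{x,y\}$ in at most $\mu-2$ vertices, hence is non-adjacent to at least $c-\mu$ of the $c-2$ vertices of $C\setminus\{x,y\}$. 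The second tool is the no-$\bar c$-claw condition itself: any coclique of $\Gamma(x)$ has at most $\bar c-1$ vertices, and since $C\setminus\{x\}$ is a clique such a coclique contains at most one vertex of $C$. The plan is to combine these by contradiction: assuming $|B|>(\bar c-2)(\mu-1)$, I would grow a maximal coclique $I\subseteq B$ and note that the vertices of $I$ collectively meet $C\setminus\{x,y\}$ in at most $|I|(\mu-2)$ vertices, so when $c-2$ is large enough some $z\in C\setminus\{x,y\}$ is non-adjacent to all of $I$, making $I\cup\{z\}$ a coclique of $\Gamma(x)$ of order $|I|+1$. The interplay "one clique vertex plus an independent set among the excess common neighbours'' is exactly what should convert $\alpha(\Gamma(x))\leqslant\bar c-1$ into the coefficient $\bar c-2$ and the $\mu-1$ paid per independent direction; pushing $|I|$ to $\bar c-1$ would force a $\bar c$-claw and the desired contradiction.

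The step I expect to be the main obstacle is that $B$ itself may contain large cliques, so one cannot simply cover $B$ by a few cliques of size $\leqslant\mu-1$ and multiply. The resolution I would pursue is to exploit the maximality of $C$, which caps every clique inside $B$ at $c-2$ vertices (such a clique together with $x,y$ is again a clique, hence no larger than $C$), and to interlock this cap with the common-neighbour bound so that the coclique $I$ can genuinely be grown to length $\bar c-1$ while a clique vertex $z$ remains available at each stage. Getting this interlocking to output the exact factor $\bar c-2$ (rather than the cruder $\bar c-1$), and handling the boundary regime where $c$ is small, is where I anticipate the real work to lie; an alternative I would keep in reserve is a greedy construction of the clique from the edge $\{x,y\}$, for which the order of the resulting clique is $2+\lambda$ minus the number of discarded common neighbours, so that one would instead bound the total discard by $(\bar c-2)(\mu-1)$ using the same two tools.
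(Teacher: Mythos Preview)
Your reduction to showing $|B|\leqslant(\bar c-2)(\mu-1)$ for a maximum clique $C$ is a correct reformulation, but the argument you sketch to prove it does not close, and the obstacle you flag is genuine. Growing a coclique $I$ inside $B$ and appending some $z\in C\setminus\{x,y\}$ non-adjacent to all of $I$ requires $c-2>|I|(\mu-2)$, which you have no way to guarantee; and bounding $|B|$ via a maximal coclique $I\subseteq B$ fails because the $\mu$-bound only controls the number of neighbours a vertex of $B$ has inside the clique $C$, not inside $B$ itself. Neither the cap $|K|\leqslant c-2$ on cliques $K\subseteq B$ nor the $\mu-2$ bound on edges from $B$ into $C\setminus\{x,y\}$ gives you any direct leverage on $|B|$, so the ``interlocking'' you hope for has nothing to bite on.

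The paper's proof sidesteps this by reversing the order: rather than fixing a maximum clique and analysing the excess common neighbours, it fixes a vertex $x$, takes a \emph{maximum coclique} $\{y_1,\dots,y_s\}$ in $\Gamma(x)$ (so $s\leqslant\bar c-1$ by Remark~\ref{clawclique}), and observes that the set of vertices in $\Gamma(x)$ adjacent to $y_1$ but to none of $y_2,\dots,y_s$ is automatically a clique---two non-adjacent such vertices together with $y_2,\dots,y_s$ would form a coclique of size $s+1$. Since $y_1$ has $\lambda$ neighbours in $\Gamma(x)$ and each non-adjacent pair $\{y_1,y_i\}$ shares at most $\mu-1$ of them (one being $x$), this clique has at least $\lambda-(s-1)(\mu-1)$ vertices, and together with $x$ and $y_1$ gives the bound. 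The idea you are missing is to start from the coclique side: once the maximum coclique is fixed, its maximality \emph{manufactures} the clique for free, and the factor $\bar c-2$ falls out immediately as $s-1\leqslant\bar c-2$, with none of the delicate bookkeeping your approach would demand.
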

\begin{proof}
Let $x$ be a vertex of $\Gamma$ and let $s$ be the maximum number such that the graph induced by $\Gamma(x)$ contains a coclique of order $s$. 
Note that, by Remark~\ref{clawclique}, we know that $s\leqslant \bar{c}-1$. 
Assume that $\{y_1, y_2, \ldots, y_s\}$ is a coclique of order $s$ in $\Gamma(x)$. 
The vertex $y_1$ has $\lambda$ neighbors in $\Gamma(x)\backslash\{y_2, \dots, y_s\}$ and that for each $i \in \{2,\dots,s\}$, the vertices $y_1$ and $y_i$ have at most $\mu-1$ common neighbors in $\Gamma(x)\backslash\{y_2, \dots, y_s\}$. 
Hence, there are at least $\lambda-(s-1)(\mu-1)$ vertices in $\Gamma(x)$ that are adjacent to $y_1$ but not adjacent to $y_i$ for all $i \in \{2,\dots,s\}$. 
If two such vertices were not adjacent, then those two vertices together with $y_2, \ldots, y_s$ would induce a coclique of order $s+1$, a contradiction. 
Thus, the graph induced by $\Gamma(x)$ contains a clique of order at least $1+\lambda-(s-1)(\mu-1)$ and hence, $\Gamma$ contains a clique of order at least $2+\lambda-(s-1)(\mu-1)\geqslant 2+\lambda-(\bar{c}-2)(\mu-1)$. 
\end{proof}



As an application of the maximal clique polynomial, we prove that there cannot exist any strongly regular graph having the feasible parameters $(v,k,\lambda,\mu)$ in the following theorem.

\begin{theorem}\label{nosrg}
Let $m\geqslant4$ be an integer. Then, there are no strongly regular graphs with the following parameters:
\begin{itemize}
\item[] $v=1+k+k(k-\lambda-1)/\mu$,
\item[] $k=(m+1)(m(2-\mu)+2\lambda)/2+1$,
\item[] $\lambda=\frac{(m-3)^5+15(m-3)^4+91(m-3)^3+283(m-3)^2)}{2}+226(m-3)+148$,
\item[] $\mu=(m-3)^3+10(m-3)^2+33(m-3)+38$.
\end{itemize}
\end{theorem}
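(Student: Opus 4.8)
The plan is to combine the claw-bound (Lemma~\ref{largeclique1}) with the maximal-clique polynomial of Remark~\ref{maxclipoly}: the claw-bound forces a maximal clique whose order is confined to an explicit interval, and on that interval I will show that $M_\Gamma(c)<0$, contradicting Lemma~\ref{lem:srg2}.

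First I would simplify the parameters. Expanding the given polynomials (for instance after the substitution $t=m-3$) one checks the identity $2\lambda+2(m+1)=(m^2-m+2)\mu$, so that $\lambda=\frac{m^2-m+2}{2}\mu-(m+1)$; consequently the stated formula for $k$ is exactly the standard relation $k=m\lambda-m\mu+m^2+\mu$ of a strongly regular graph with smallest eigenvalue $-m$, and the second eigenvalue is $\sigma=\frac{m(m-1)}{2}\mu-1$. Note this puts $\sigma$ just below the threshold $\frac12m(m-1)(\mu+1)-1$ of Theorem~\ref{SNB}, so that result does not apply. I would also record that $\mu>m(m-1)$ for all $m\geqslant4$, since $\mu$ grows like $m^3$.

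Next is the claw-bound step. Setting $f(\bar c)=\bar c(\lambda+1)-k-\binom{\bar c}{2}(\mu-1)$, the no-claw condition of Remark~\ref{clawclique} is $f(\bar c)>0$. Using the relations above, the coefficient of $\mu$ in $f(\bar c)$ factors as $-\frac12\bigl(\bar c-(m+1)\bigr)\bigl(\bar c-(m^2-2m+2)\bigr)$, which is positive exactly for $m+1<\bar c<m^2-2m+2$; since $\mu$ dominates the lower-order terms, $f(\bar c)>0$ holds there for $m\geqslant4$. Choosing the smallest admissible integer $\bar c=m+2$ (for which a short computation gives $f(m+2)=\frac{m(m-3)}{2}\mu-\frac{(m-2)(m+1)}{2}>0$) and applying Lemma~\ref{largeclique1}, the graph $\Gamma$ contains a clique, and hence a maximal clique $C$, of order $c\geqslant c_0:=2+\lambda-m(\mu-1)=1+\frac{(m-1)(m-2)}{2}\mu$. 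On the other hand, the Delsarte bound gives $c\leqslant c_1:=1+\frac{k}{m}=\frac{m(m-1)}{2}\mu+\frac{\mu}{m}$, and $c_0<c_1$, so $c_0\leqslant c\leqslant c_1$.

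It remains to derive a contradiction. Because $\mu>m(m-1)$ while $\frac{\mu^2}{\mu-m(m-1)}-m+1$ is of order $\mu$ and $c_0$ is of order $m^2\mu$, the maximal clique $C$ satisfies $c>\frac{\mu^2}{\mu-m(m-1)}-m+1$, so Lemma~\ref{lem:srg2} forces $M_\Gamma(c)\geqslant0$. The hard part is to prove the opposite, namely that $M_\Gamma(c)<0$ for every real $c\in[c_0,c_1]$. Writing $M_\Gamma(c)=P(c)^2-(k-c+1)^2D(c)$ with $P(c)=(c+m-3)(k-c+1)-2(c-1)(\lambda-c+2)$ and $D(c)=(c+m-1)(c-(m-1)(4m-1))$, and noting that $k-c+1>0$ and $D(c)>0$ throughout the interval, the inequality $M_\Gamma(c)<0$ is equivalent to $|P(c)|<(k-c+1)\sqrt{D(c)}$. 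Since $M_\Gamma$ is a cubic with positive leading coefficient (Remark~\ref{maxclipoly}), its maximum over $[c_0,c_1]$ is attained either at an endpoint or at a local maximum of $M_\Gamma$ in the interior; it therefore suffices to verify $M_\Gamma(c_0)<0$, $M_\Gamma(c_1)<0$, and that any interior local maximum takes a negative value. Substituting the expressions for $c_0$ and $c_1$ (both linear in $\mu$) into $M_\Gamma$ and then using $\mu=\mu(m)$ reduces each of these to a single polynomial inequality in $m$ to be checked for $m\geqslant4$. This last reduction carries essentially all of the arithmetic, and it is the main obstacle: the specific polynomial $\mu(m)$ in the statement is precisely what makes these inequalities valid, so the delicate point is to bound $M_\Gamma$ exactly on $[c_0,c_1]$ rather than merely asymptotically.
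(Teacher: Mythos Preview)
Your strategy coincides with the paper's: take $\bar c=m+2$ in the claw-bound to force a maximal clique of order $c$ with $2+\lambda-m(\mu-1)\leqslant c\leqslant 1+k/m$, check that the hypothesis of Lemma~\ref{lem:srg2} is met, and then show $M_\Gamma(c)<0$ throughout this interval. Your preliminary identities (in particular $\lambda=\tfrac{m^2-m+2}{2}\mu-(m+1)$ and $f(m+2)=\tfrac{m(m-3)}{2}\mu-\tfrac{(m-2)(m+1)}{2}$) are correct and match what one needs.

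The one substantive difference is in the final step. You propose to rule out $M_\Gamma(c)\geqslant0$ on the interval by checking the two endpoints \emph{and} any interior local maximum of $M_\Gamma$; the paper instead checks the two endpoints together with the single extra evaluation $M_\Gamma(0)>0$. Since $M_\Gamma$ is a cubic with positive leading coefficient and $0<2+\lambda-m(\mu-1)<1+k/m$, the sign pattern $(+,-,-)$ at $0$, $2+\lambda-m(\mu-1)$, $1+k/m$ forces $M_\Gamma$ to have three real roots with the entire interval $[2+\lambda-m(\mu-1),\,1+k/m]$ trapped strictly between the second and third roots, where $M_\Gamma<0$. This replaces your critical-point analysis, which would require solving $M_\Gamma'(c)=0$ (coefficients depending on $k,\lambda,m$) and then evaluating $M_\Gamma$ there, by a third direct substitution; the resulting verification reduces to three explicit polynomial inequalities in $m$ rather than the more involved computation you flag as the ``main obstacle.''
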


\begin{proof}
Suppose that there exists a strongly regular graph $\Gamma$ with such parameters $(v,k,\lambda,\mu)$ for some integer $m\geqslant 4$. Set $\bar{c}=m+2$. 
Then ${\bar{c} \choose 2}(\mu-1)<\bar{c}(\lambda+1)-k$, and, by Lemma~\ref{largeclique1}, we know that $\Gamma$ contains a clique of order at least $2+\lambda-m(\mu-1)$. 
Thus, the graph $\Gamma$ contains a maximal clique $C$ of order $c_1\geqslant 2+\lambda-m(\mu-1)$.  
Note that the Delsarte bound implies that $2+\lambda-m(\mu-1)\leqslant c_1\leqslant 1+\frac{k}{m}$. 

Recall the maximal-clique polynomial $M_\Gamma(c)$. 
Since $m\geqslant 4$, we know that 
$$c_1\geqslant 2+\lambda-m(\mu-1)> \frac{\mu^2}{\mu-m(m-1)}-m+1.$$ 
Then Lemma~\ref{lem:srg2} implies that $M_\Gamma(c_1)\geqslant 0$.  
Note that $M_\Gamma(0)>0$, $M_\Gamma(2+\lambda-m(\mu-1))<0$ and $M_\Gamma(1+\frac{k}{m})<0$. 
But this is not possible since $M_\Gamma(c)$ is a cubic polynomial with a positive leading coefficient (Remark~\ref{maxclipoly}). 
Therefore, there are no strongly regular graphs with such parameters $(v,k,\lambda,\mu)$ for all integer $m\geqslant 4$.
\end{proof}

\section{Acknowledgements}

Gary Greaves is partially supported by the Singapore Ministry of Education Academic Research Fund (Tier 1); grant numbers: RG29/18 and RG21/20.

Jack H. Koolen is partially supported by the National Natural Science Foundation of China (No.12071454), Anhui Initiative in Quantum Information Technologies (No. AHY150000) and by the project ``Analysis and Geometry on Bundles" of Ministry of Science and Technology of the People’s Republic of China.

Jongyook Park is partially supported by Basic Science Research Program through the National Research Foundation of Korea funded by Ministry of Education (NRF-2017R1D1A1B03032016) and the National Research Foundation of Korea (NRF) grant funded by the Korea government (MSIT) (NRF-2020R1A2C1A01101838).

\section{Appendix}

In this appendix, we give tables of parameters $(v,k,\lambda,\mu)$ for nonexistent strongly regular graphs with smallest eigenvalue $-4,-5,-6$ or $-7$. Note that all of those parameters $(v,k,\lambda,\mu)$ are feasible (see the definition in Section~\ref{sec:2}). 
In the tables below, `forbidden range' means that the graph does not contain a maximal clique of order in the forbidden range (Lemma~\ref{lem:srg2}), `Delsarte bound' means $\lfloor 1+\frac{k}{m}\rfloor$, where $-m$ is the smallest eigenvalue, and `guaranteed clique order' means the graph contains a (maximal) clique of order at least that guaranteed clique order (Lemma~\ref{largeclique1}). 

\begin{table}[htbp]
  \begin{center}
  \begin{tabular}{|c | c  c c| c|c | c|}
\hline
    $\mu$ & $v$ & $k$ & $\lambda$ & forbidden range & Delsarte bound & guaranteed clique order \\
    \hline
    58 & 23276 & 1330 & 372 &  [71, 340] & 333 & 146 \\
    62 & 25025 & 1426 & 399 &  [74, 368] & 357 & 157 \\
    80 & 27455 & 1696 & 480 &  [92, 450] & 425 & 166 \\
    82 & 38875 & 2046 & 569 &  [94, 539] & 512 & 247 \\
\hline
  \end{tabular}
  \end{center}
\caption{Parameters for nonexistent strongly regular graphs with smallest eigenvalue $-4$}
\end{table}

\begin{table}[htbp]
  \begin{center}
  \begin{tabular}{|c | c  c c| c|c | c|}
\hline
    $\mu$ & $v$ & $k$ & $\lambda$ & forbidden range & Delsarte bound & guaranteed clique order \\
    \hline
  115 & 133570 & 4365 & 960 & [136, 885] & 874 & 278\\
  122 & 230958 & 5917 & 1276 &  [142, 1202] & 1184 & 673\\
  150 & 235586 & 6625 & 1440 & [170, 1367] & 1326 & 697\\
  152 & 317628 & 7747 & 1666 & [172, 1593] & 1550 & 913\\
  168 & 328560 & 8283 & 1786 & [187, 1714] & 1657 & 953\\
  170 & 259000 & 7395 & 1610 & [189, 1538] & 1480 & 767\\
  172 & 309016 & 8127 & 1758 & [191, 1686] & 1626 & 905\\
  205 & 225885 & 7580 & 1675 & [224, 1605] & 1517 & 453\\
  214 & 404587 & 10374 & 2241 & [233, 2170] & 2075 & 1178\\
  240 & 314116 & 9675 & 2122 & [258, 2052] &1936 & 929\\
  240 & 485815 & 12040 & 2595 &  [258, 2524] & 2409 & 1402\\
\hline
  \end{tabular}
  \end{center}
\caption{Parameters for nonexistent strongly regular graphs with smallest eigenvalue $-5$}
\end{table}

\begin{table}[htbp]
  \begin{center}
  \begin{tabular}{|c | c  c c| c|c | c|}
\hline
    $\mu$ & $v$ & $k$ & $\lambda$ & forbidden range & Delsarte bound & guaranteed clique order \\
    \hline
 201 & 545832 & 11451 & 2070 &  [232, 1926] & 1909 & 472\\
 204 & 895665 & 14784 & 2628 &  [235, 2484] & 2465  &1412\\
 206 & 1331968 & 18122 & 3186 & [237, 3042] & 3021 & 1958\\
210 & 997920 & 15834 & 2808 & [241, 2664] & 2640 & 1556\\
 212 & 1371657 & 18656 & 3280 & [242, 3137] & 3110 & 2016\\
 252 & 1352572 & 20196 & 3570 & [282, 3428] & 3367 & 2066\\
 254 & 1756209 & 23108 & 4057 & [284, 3915] & 3852 & 2541\\
 264 & 717574 & 15048 & 2722 &  [293, 2582] & 2509 & 620\\
 267 & 886222 & 16821 & 3020 & [296, 2880] & 2804 & 1160\\
 270 & 1112320 & 18954 & 3378 & [299, 3237] & 3160 & 1766\\
 273 & 1423818 & 21567 & 3816 & [302, 3675] & 3595 & 2186\\
 276 & 1867591 & 24840 & 4364 & [305, 4223] & 4141 & 2716\\
 280 & 1026875 & 18544 & 3318 & [309, 3178] & 3091 & 1646\\
 315 & 855570  & 17949 & 3248 & [344, 3110] & 2992  &738\\
 324 & 1462209 & 23808 & 4232 &[353, 4093]& 3969 &2296\\
 327 & 1791882 & 26481 & 4680& [356, 4540] &4414 &2726\\
 330 & 2232000 & 29694 & 5218& [359, 5078] &4950 &3246\\
 380 & 1503625 & 26144 & 4668 &[408, 4530] &4358 &2396\\
 390 & 2223180 & 32214 &5688 &[418, 5549] &5370 &3356\\
 438 & 1148448 & 24522 &4446 &[466, 4311] &4088 &952\\
 440 & 1212001 & 25250 &4569 &[468, 4433] &4209 &1498\\
 450 & 1605240 & 29394 &5268 &[478, 5132] &4900 &2576\\
 456 & 1920621 & 32370 &5769 &[484, 5632] &5396 &3041\\
 468 & 2835028 & 39852 &7026 &[496, 6888] &6643 &4226\\
 470 & 3039520 & 41354 &7278 &[498, 7140] &6893 &4466\\
 472 & 3263897 & 42946 &7545 &[500, 7407] &7158 &4721\\
\hline
  \end{tabular}
  \end{center}
\caption{Parameters for nonexistent strongly regular graphs with smallest eigenvalue $-6$}
\end{table}

\begin{table}[htbp]
  \begin{center}
  \begin{tabular}{|c | c  c c| c|c | c|}
\hline
    $\mu$ & $v$ & $k$ & $\lambda$ & forbidden range & Delsarte bound & guaranteed clique order \\
    \hline
 322& 1769600 &25753 &3948 &[365, 3702] &3680 &1061\\
 329 &4271421 &40460 &6055 &[372, 5810] &5781 &3761\\
 338 &6057152 &48841 &7260 &[380, 7016] &6978 &4903\\
 364 &2747626 &34125 &5180 &[406, 4937] &4876 &2278\\
 369 &5619713 &49152 &7331 &[411, 7088] &7022 &4757\\
 372 &5783316 &50065 &7464 &[414, 7221] &7153 &4869\\
 382 &2369800 &32463 &4958 &[424, 4716] &4638 &1531\\
 392 &5873750 &51793 &7728 &[434, 7485] &7400 &4993\\
 394 &7404736 &58305 &8660 &[436, 8417] &8330 &5911\\
 417 &7593750 &60743 &9028 &[458, 8786] &8678 &6118\\
 467 &2897225 &39688 &6063 &[508, 5824] &5670 &1871\\
 474 &4178176 &48025 &7260 &[515, 7021] &6861 &3951\\
 483 &7331625 &64232 &9583 &[524, 9342] &9177 &6211\\
 486 &9133968 &71921 &10684 &[526, 10443] &10275 &7291\\
 522 &3891200 &48633 &7388 &[562, 7150] &6948 &3222\\
 522 &7314000 &66693 &9968 &[562, 9728] &9528 &6323\\
 532 &4231150 &51198 &7763 &[572, 7525] &7315 &3517\\
 539 &7818591 &70070 &10465 &[579, 10226] &10011 &6701\\
 595 &3189151 &46998 &7217 &[635, 6982] &6715 &1279\\
 630 &3325728 &49385 &7588 &[670, 7354] &7056 &1300\\
 630 &10072881 &85988 &12817 &[670, 12578] &12285 &8416\\
 634 &11915776 &93825 &13940 &[673, 13701] &13404 &9511\\
 665 &12031999 &96558 &14357 &[704, 14118] &13795 &9711\\
 689 &5860526 &68575 &10380 &[728, 10144] &9797 &5566\\
 714 &3844176 &56525 &8680 &[753, 8447] &8076 &1552\\
 735 &5498361 &68600 &10423 &[774, 10188] &9801 &4553\\
 742 &6244525 &73458 &11123 &[781, 10888] &10495 &5197\\
 746 &6729536 &76465 &11556 &[785, 11321] &10924 &6343\\
 762 &9236916 &90551 &13582 &[801, 13346] &12936 &8257\\
 763 &9431401 &91560 &13727 &[802, 13490] &13081 &8395\\
 770 &4190144 &61285 &9408 &[809, 9176] &8756 &1720\\
 777 &12822369 &107744 &16051 &[816, 15813] &15393 &10621\\
 780 &13752261 &111800 &16633 &[819, 16395] &15972 &11182\\
 816 &12114648 &107321 &16024 &[855, 15787] &15332 &10321\\
 833 &4660685 &67228 &10311 &[872, 10079] &9605 &2825\\
 841 &5217895 &71478 &10925 &[880, 10692] &10212 &3367\\
 849 &5861241 &76120 &11595 &[888, 11362] &10875 &4813\\
 882 &9888000 &100793 &15148 &[921, 14913] &14400 &8983\\
 888 &10974960 &106553 &15976 &[927, 15740] &15222 &9769\\
 889 &11171083 &107562 &16121 &[928, 15885] &15367 &9907\\
 900 &5357638 &74925 &11468 &[939, 11236] &10704 &3379\\
 903 &14473410 &123403 &18396 &[942, 18159] &17630 &12084\\
 918 &5105376 &73865 &11332 &[957, 11101] &10553 &3081\\
 924 &6973876 &86625 &13160 &[963, 12927] &12376 &5778\\
 980 &13835098 &125685 &18788 &[1018, 18552] &17956 &11937\\
 990 &5550960 &79985 &12268 &[1028, 12037] &11427 &3369\\
 1007 &8792525 &101548 &15363 &[1045, 15130] &14507 &8323\\
\hline
  \end{tabular}
  \end{center}
\caption{Parameters for nonexistent strongly regular graphs with smallest eigenvalue $-7$}
\end{table}

\end{document}